\numberwithin{equation}{section}
\def\ss{\mathtt{s}}
\newtheorem{thm}{Theorem}[section]
\newtheorem{cor}[thm]{Corollary}
\theoremstyle{remark}
\begin{document}

\title{Markov chain approximations for one dimensional diffusions}

\author{Xiaodan Li}
\address{Fudan University, Shanghai 200433, China.}
\email{xdli15@fudan.edu.cn}

\author{Jiangang Ying}
\address{Fudan University, Shanghai 200433, China.}
\email{jgying@fudan.edu.cn}

\thanks{ The second named author is partially supported by NSFC No. 11871162.}

\subjclass[2010]{Primary 60B10, 60J27; secondary 60J60}

\keywords{Weak convergence, Diffusions, Markov chains, Dirichlet forms}

\begin{abstract}
The Markov chain approximation of a one-dimensional symmetric diffusion is investigated in this paper. Given an irreducible reflecting diffusion on a closed interval with scale function $s$ and speed measure $m$, the approximating Markov chains are constructed explicitly through the trace of the Dirichlet form corresponding to the diffusion. One feature of our approach is that it does not require uniform ellipticity on diffusion coefficient of the limit object or uniform regularity on conductances of the approximative Markov chains, as imposed usually in the previous related works.

\end{abstract}

\maketitle

\tableofcontents
\section{Introduction}\label{Section 1}
In this paper, let $I\subset\mathbb{R}$ be a closed interval with Radon measure $m$, for simplicity $I=[0,1]$. We are concerned with the discrete approximation of an irreducible $m$-symmetric diffusion, or equivalently an irreducible, strongly local, regular Dirichlet form on $L^2(I)=L^2(I,m)$.
An irreducible strongly local regular Dirichlet form on $L^2(I)$ can be represented by
the following Dirichlet form $(\mathscr{E}^{(\ss)},\mathscr{F}^{(\ss)})$  on $L^2(I)$,
\begin{align}\label{eqd}
\begin{split}
&\mathscr{F}^{(\ss)} =\bigg\{u\in L^2(I): u\ll \ss, \dfrac{du}{d\ss}\in L^2(I,d\ss)\bigg\},\\
&\mathscr{E}^{(\ss)}(u,v) =\frac{1}{2}\int_I\frac{du}{d\ss}\frac{dv}{d\ss}d\ss, \qquad u,v\in \mathscr{F}^{(\ss)},
\end{split}
\end{align}
where $\ss$ is a continuous and strictly increasing function on $I$, called a scale function on $I$, $u\ll \ss$ means that the function $u$ is absolutely continuous with respect to $\ss$, and $L^2(I,d\ss)$ is the space of square integrable functions with respect to $d\ss$ on $I$.

It is known from \cite{XPJ} that \eqref{eqd} is 
associated with a diffusion $X$ on $I$, reflected at the boundary, with scale function $\ss$ and speed measure $m$. Actually a one dimensional locally conservative (or no killing inside) irreducible diffusion is characterized by its scale function and speed measure. Classic literatures on one dimensional diffusions are referred to \cite{KH}\cite{LD}. See \cite{MYM} for details about theory of Dirichlet forms.

Stochastic processes, especially diffusions, play an active role in financial and physical models. How to simulate the stochastic processes by Markov chains is an important question in practical applications. Therefore, for one-dimensional diffusions, in this paper we give a natural way to construct the approximative Markov chains, that is , through traces of $(\mathscr{E}^{(\ss)},\mathscr{F}^{(\ss)})$ on approximative lattices. Intuitively, traces of $(\mathscr{E}^{(\ss)},\mathscr{F}^{(\ss)})$ record the trajectory information of $X$ on the lattices. So it is direct and efficient to construct the Markov chains through the trace method.

For diffusion processes on $\mathbb{R}^d$, the classic results are Donsker's invariance principle \cite{M} for Brownian motion and \cite{DS} for diffusion processes in non-divergence form. In \cite{DW}, Stroock and Zheng solve the problem for diffusions corresponding to uniformly elliptic operators in divergence form via Markov chains with finite range and certain uniform regularity. In \cite{RT}, Bass and Kumagai extend the results in two ways: Markov chains with unbounded range are allowed and the strong uniform regularity condition on conductances is weakened to a uniform finite second moment condition. For both \cite{DW}\cite{RT}, a crucial step is to obtain a priori heat kernel estimate of the Markov chains to deduce tightness. In \cite{KZ}\cite{KZ2}, Burdzy and Chen give the discrete approximation for reflecting Brownian motion in a general bounded domain. They use a Dirichlet form based approach to obtain tightness. However, the novelty of our result is that we do not impose conditions like above, precisely it differs from the works above in the following aspects.
\begin{itemize}
\item[1.] In former works, the diffusion coefficient of the limit object is assumed to be continuous and uniformly elliptic. In our setting, the infinitesimal generator is
    $${1\over 2}{d\over dx}{d\over d\ss},$$
    and the diffusion coefficient is $1/\ss'$, when $\ss$ is smooth. The conditions above are satisfied only when $\ss^\prime$ is continuous, bounded away from 0 and above.
We stress here that we allow the discontinuity, degeneracy, singularity and even non-existence of the diffusion coefficient.
\item[2.] In former works, the conductances of the approximative Markov chains are required to satisfy certain uniform regularity,
which does not necessarily hold for the conductance constructed in our approach.
    
\item[3.] In former works, the conductances of the approximating Markov chains are required $L^1_{loc}$ converging to the diffusion coefficient. Our conductances are pointwise convergent and may not satisfy the $L^1_{loc}$ convergence.
\end{itemize}

 We provide here a typical example to show the generality and power of the main result (Theorem \ref{thmwc}). Fix a strictly increasing and absolutely continuous function $\ss$ on $I$ satisfying 
 \[
 \ss'(x)=0~\text{or}~1~\text{a.e.}
 \]
Let $G :=\{x\in I:\ss'(x)=1\}$. Then $G$ is defined in the sense of almost everywhere and it holds that 
$m\big(G\cap (a,b)\big)>0, \quad \forall (a,b)\subset\mathbb{R}.$ Note that this is equivalent to that $\ss$ is strictly increasing. Furthermore, assume that $m(G^c)>0$. In fact, the typical example of $G^c$ is a generalized Cantor set. Therefore, $(\mathscr{E}^{(\ss)},\mathscr{F}^{(\ss)})$ in \eqref{eqd} is a proper regular Dirichlet subspace (see \cite{LJ}) of Brownian motion, which corresponds to the $m$-symmetric diffusion $X$ with singular diffusion coefficient. For $n\geq 1$, the Dirichlet form $(\mathscr{E}^n,\mathscr{F}^n)$ of the approximating Markov chain $X^n$ is constructed as follows:
\begin{align}\label{mc}
\begin{split}
&\mathscr{E}^n(\varphi,\varphi)=\frac{1}{4}\sum_{x\sim y}\big(\varphi(x)-\varphi(y)\big)^2\dfrac{1}{|\ss(x)-\ss(y)|} ,\\
&\mathscr{F}^n=\{\varphi\in L^2(I_n,m_n) : \mathscr{E}^n(\varphi,\varphi)<\infty\},
\end{split}
\end{align}
where $x\sim y$ means $x$ and $y$ are neighbored. $I_n$ and $m_n$ are defined in \S\ref{s2}. Clearly, the conductance in \eqref{mc} does not satisfy the uniform regularity  condition (A1) in \cite{RT}.

The idea to prove the approximation, borrowed from \cite{P}, is to prove Mosco convergence
and tightness. However in the proof of Mosco convergence, the dimension plays
the key role while the proof of tightness is relatively general.

The rest of the paper is organized as follows. In \S\ref{s2}, we provide the construction of a sequence of approximative Markov chains for the given diffusion corresponding to \eqref{eqd}, and then give the precise statement of the main weak convergence result (Theorem \ref{thmwc}). In what follows, the proof of Theorem \ref{thmwc} will be divided into two parts. In \S\ref{s3}, we prove the Mosco convergence of the associated Dirichlet forms of $X^n$. In \S\ref{s4}, we complete the proof of weak convergence by providing some tightness results.

{\it Notations.}
The notation `:=' is read as `to be defined as'. Given a domain $D\subset\mathbb{R},$ the families $C(D),C_c(D)$, and $C_c^\infty(D)$ are those of all continuous functions on $D$, all continuous functions on $D$ with compact support and all smooth functions on $D$ with compact support respectively. The notation $\|\cdot\|_\infty$ means the supremum norm of a bounded function. $\mathbb{E}_x^n$ (resp.$\mathbb{E}_\xi^n) $ and $\mathbb{P}_x^n$ (resp.$\mathbb{P}_\xi^n)$ means expectation  and probability with respect to $X^n$ with starting point $X_0=x$ (resp. initial distribution $\xi$). For $t\geq 0,\lambda>0$, $T_t^n$ and $G_\lambda^n$ are the semigroup and resolvent associated with $X^n$. Similar notations ($\mathbb{E}_x,\mathbb{P}_x,\mathbb{E}_\xi,T_t,G_\lambda)$ are understood in the same way for $X$. Given $T\subset\mathbb{R}^+$, let
$$\mathbb{D}_DT :=\{f:T\mapsto D | f \mbox{ is right continuous having left limits}\}.$$

\section{Main results}\label{s2}
Now we will construct a sequence of Markov chains to approximate the diffusion process $X$ corresponding to \eqref{eqd}.
Take a partition of $I$: $0=a_1<a_2<\cdots <a_{n+1}=1$, set $I_n=\{a_i\}_{i=1}^n$ and $\Delta_i=a_{i+1}-a_{i}$, $1\le i\le n$. Let the sequence of partitions satisfy $\Delta^n :=\max\limits_i\Delta_i\rightarrow 0$ as $n\rightarrow\infty$.
Define point measure on $I_n$ as $m_n(\{a_i\})=m([a_i,a_{i+1})), 1\leq i<n$. It is known that $m_n$ is a Radon smooth measure supported on $I_n$ .

Denote the time-changed process of $X$ with respect to $m_n$ by $X^n$. Then $X^n$ is corresponding to a $m_n$- symmetric regular Dirichlet form $(\mathscr{E}^n,\mathscr{F}^n)$ on $L^2(I_n, m_n)$, that is, the traces of $(\mathscr{E}^{(\ss)},\mathscr{F}^{(\ss)})$ on $I_n$. Precisely, let $\sigma_n$ be the hitting time of $I_n$ relative to $X$. The extended Dirichlet space of $\mathscr{F}^{(\ss)}$ is 
$$\mathscr{F}_e^{(s)} :=\bigg\{u: u\ll \ss, \dfrac{du}{d\ss}\in L^2(I,d\ss)\bigg\}.$$
We consider the following orthogonal decomposition of the space (not necessarily Hilbert space ) $(\mathscr{F}^{(\ss)}_e,\mathscr{E}^{(\ss)})$:
\begin{equation}\label{eqo}
\mathscr{F}^{(\ss)}_e=\mathscr{F}^{(\ss)}_{e,I\backslash I_n}\oplus\mathscr{H}_{I_n}^{(s)},
\end{equation}
where $\mathscr{F}^{(\ss)}_{e,I\backslash I_n}=\{u\in \mathscr{F}^{(\ss)}_e: u=0 \mbox{ q.e. on } I_n\}$. We know that for any $u\in \mathscr{F}^{(\ss)}_e$, $H_{I_n}u(x)=\mathbb{E}_x\big(u(X_{\sigma_n})\big)$ gives the probabilistic expression of the orthogonal projection of $u$ on the space $\mathscr{H}_{I_n}^{(s)}$ and accordingly 
$$
\mathscr{H}_{I_n}^{(s)}=\{H_{I_n}u : u\in\mathscr{F}_e^{(\ss)}\}.
$$
From \cite[$\S$6.2]{MYM}, we have
\begin{align*}
&\mathscr{F}^n=\{\varphi\in L^2(I_n,m_n):\varphi=u \mbox{ $m_n$-a.e. on } I_n \mbox{ for some } u\in\mathscr{F}_e^{(s)}\},\\
&\mathscr{E}^n(\varphi,\varphi) =\mathscr{E}^{(s)}(H_{I_n}u, H_{I_n}u), \quad \varphi\in\mathscr{F}^n, \varphi=u \mbox{ $m_n$-a.e. on } I_n \mbox{ for some } u\in\mathscr{F}_e^{(s)}.
\end{align*}
In the following we claim that 
\begin{align}\label{eqc1}
\begin{split}
&\mathscr{E}^n(\varphi,\varphi)=\frac{1}{2}\sum_{x,y\in I_n}\big(\varphi(x)-\varphi(y)\big)^2C_{x,y}^n,\\
&\mathscr{F}^n=\{\varphi\in L^2(I_n,m_n) : \mathscr{E}^n(\varphi,\varphi)<\infty\},
\end{split}
\end{align}
where $C^n : I_n\times I_n\mapsto\mathbb{R}^+$, is the conductivity function, satisfying $C^n_{x,y}=C^n_{y,x}$ and $C^n_{x,x}=0$ for $x,y\in I_n$. And
\begin{equation}\label{eqc2}
C_{x,y}^n=
\begin{cases}
\dfrac{1}{2|\ss(x)-\ss(y)|} &\mbox{ if $x$ and $y$ are neighbored}; \\
0&\mbox{ otherwise.}
\end{cases}
\end{equation}

In fact, fix $\varphi\in L^2(I_n,m_n), \varphi=u \mbox{ $m_n$-a.e. on } I_n \mbox{ for some } u\in\mathscr{F}_e^{(s)}$. If $x\in [a_i, a_{i+1}]$ for some $1\leq i\leq n-1$. From the continuity of the trajectories of $X$, it follows that 
$$X_{\sigma_n}=a_i\mbox{ or } a_{i+1}, \quad\mathbb{P}_x\mbox{-a.s.}$$
Hence, 
\begin{align*}
H_{I_n}u(x)&=\varphi(a_i)\cdot\mathbb{P}_x(X_{\sigma_n}=a_i)+\varphi(a_{i+1})\cdot\mathbb{P}_x(X_{\sigma_n}=a_{i+1})\\
&=\varphi(a_i)\dfrac{\ss(a_{i+1})-\ss(x)}{\ss(a_{i+1})-\ss(a_i)}+\varphi(a_{i+1})\dfrac{\ss(x)-\ss(a_i)}{\ss(a_{i+1})-\ss(a_i)}\\
&=\varphi(a_i)+\dfrac{\varphi(a_{i+1})-\varphi(a_i)}{\ss(a_{i+1})-\ss(a_i)}\big(\ss(x)-\ss(a_i)\big).
\end{align*}
If $x\in [a_n, a_{n+1}]$, $H_{I_n}u(x)=\varphi(a_n)$. So we get $H_{I_n}u\ll \ss$. And
\begin{equation*}
\dfrac{dH_{I_n}u}{d\ss}(x)=\dfrac{\varphi(a_{i+1})-\varphi(a_i)}{\ss(a_{i+1})-\ss(a_i)}, \quad x\in (a_i,a_{i+1}), 1\leq i\leq n-1.
\end{equation*}
In this way
\begin{align*}
\mathscr{E}^n(\varphi,\varphi) &=\mathscr{E}^{(s)}(H_{I_n}u, H_{I_n}u)=\frac{1}{2}\int_I\bigg(\dfrac{dH_{I_n}u}{d\ss}\bigg)^2d\ss\\
&=\frac{1}{2}\sum_{i=1}^n\int_{a_i}^{a_{i+1}}\bigg(\frac{dH_{I_n}u}{d\ss}\bigg)^2d\ss=\frac{1}{2}\sum_{i=1}^{n-1}\dfrac{\big(\varphi(a_{i+1})-\varphi(a_i)\big)^2}{\ss(a_{i+1})-\ss(a_i)}.\\
\end{align*}
So \eqref{eqc1} is hold. \eqref{eqc1} is associated with the Markov chain $X^n$ that stays at a state $x$ for an exponential length of time with parameter $\lambda^n(x) :=\sum\limits_{z\neq x}C^n_{x,z}/m^n(x)$ and then jumps to the neighbor $y$ with probability $C^n_{x,y}/\big(\sum\limits_{z\in I_n}C^n_{x,z}\big)$.

\begin{thm}\label{thmwc}
The continuous-time Markov chains $\{(X^n,\mathbb{P}_{m_n}^n);n\geq 1\}$ on $I_n$ associated with Dirichlet form \eqref{eqc1} in which conductances are set by \eqref{eqc2} converges weakly to $(X,\mathbb{P}_m)$ associated with \eqref{eqd} on $\mathbb{D}_{I}[0,\infty)$ equipped with the Skorohod topology.
\end{thm}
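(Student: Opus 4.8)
The plan is to follow the two-step scheme announced after the statement of Theorem \ref{thmwc}: first establish Mosco convergence of the Dirichlet forms $(\mathscr{E}^n,\mathscr{F}^n)$ on $L^2(I_n,m_n)$ to $(\mathscr{E}^{(\ss)},\mathscr{F}^{(\ss)})$ on $L^2(I,m)$, which by the Mosco--Kuwae--Shioya theory yields strong convergence of the associated semigroups $T^n_t$ and resolvents $G^n_\lambda$ (appropriately transported between the spaces $L^2(I_n,m_n)$ and $L^2(I,m)$ via the natural embeddings/restrictions); and second, prove tightness of the laws of $\{(X^n,\mathbb{P}^n_{m_n})\}$ in $\mathbb{D}_I[0,\infty)$ with the Skorohod topology. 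Combining convergence of finite-dimensional distributions — which follows from semigroup convergence once one checks it is compatible with the stationary initial law $m_n\to m$ — with tightness gives weak convergence.

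For the Mosco convergence I would proceed in the one-dimensional way the authors flag as dimension-dependent. The key structural fact is already in the excerpt: $H_{I_n}u$ is the piecewise $\ss$-linear interpolation of the values of $u$ on the lattice points $I_n$, and $\mathscr{E}^n(\varphi,\varphi)=\mathscr{E}^{(\ss)}(H_{I_n}\varphi,H_{I_n}\varphi)$. For the ``limsup'' (recovery-sequence) condition, given $u\in\mathscr{F}^{(\ss)}$ I would take $u_n$ to be the restriction of $u$ to $I_n$; then $H_{I_n}u_n$ is the $\ss$-linear interpolant of $u$, and since $u\ll\ss$ with $du/d\ss\in L^2(I,d\ss)$, the interpolants converge to $u$ in $L^2(I,m)$ and $\mathscr{E}^{(\ss)}(H_{I_n}u_n,H_{I_n}u_n)\le\mathscr{E}^{(\ss)}(u,u)$ (the interpolant minimizes energy subject to fixed nodal values) with $\limsup_n\mathscr{E}^n(u_n,u_n)=\limsup_n\mathscr{E}^{(\ss)}(H_{I_n}u_n,H_{I_n}u_n)\le\mathscr{E}^{(\ss)}(u,u)$; in fact equality holds in the limit by a density/approximation argument using $\Delta^n\to 0$. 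For the ``liminf'' condition, given $\varphi_n\to u$ weakly (in the Kuwae--Shioya sense) with $\sup_n\mathscr{E}^n(\varphi_n,\varphi_n)<\infty$, the functions $H_{I_n}\varphi_n$ lie in $\mathscr{F}^{(\ss)}$ with uniformly bounded energy, hence along a subsequence converge weakly in $\mathscr{F}^{(\ss)}$ to some $w$; one identifies $w=u$ (they have the same nodal limits and the interpolation error vanishes as $\Delta^n\to 0$) and then $\liminf_n\mathscr{E}^n(\varphi_n,\varphi_n)=\liminf_n\mathscr{E}^{(\ss)}(H_{I_n}\varphi_n,H_{I_n}\varphi_n)\ge\mathscr{E}^{(\ss)}(u,u)$ by weak lower semicontinuity of $\mathscr{E}^{(\ss)}$.

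For tightness I would use the standard Aldous criterion together with a Dirichlet-form based moment estimate, in the spirit of \cite{KZ}. Starting from stationarity $\mathbb{P}^n_{m_n}$, one controls $\mathbb{E}^n_{m_n}\big[(f(X^n_{t+h})-f(X^n_t))^2\big]$ for $f\in\mathscr{F}^{(\ss)}$ (e.g. $f=\ss$ itself, or a countable determining family of Lipschitz-in-$\ss$ functions) by $2h\,\mathscr{E}^n(f,f)\le 2h\,\mathscr{E}^{(\ss)}(f,f)$ using the spectral representation of the symmetric semigroup, which bounds modulus of continuity uniformly in $n$; together with compactness of $I$ this yields tightness in $\mathbb{D}_I[0,\infty)$. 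One must also rule out pathological jumps accumulating: because the state space is the lattice $I_n$ with mesh $\Delta^n\to 0$ and the conductances only connect neighbors, a single jump has size $\le\Delta^n\to 0$, so any subsequential limit has continuous paths, consistent with $X$ being a diffusion.

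The main obstacle I expect is the liminf half of the Mosco convergence under the weak topology of Kuwae--Shioya, precisely because the measures $m_n$ and $m$ live on different spaces and $\mathscr{F}^n$ is only a space of functions on the finite set $I_n$: one has to be careful that the interpolation operator $H_{I_n}$ interacts well with the varying Hilbert spaces, that the nodal values of a weakly convergent sequence $\varphi_n$ actually pin down the $L^2(I,m)$-limit (this is where the hypothesis $m\big(G\cap(a,b)\big)>0$, i.e. strict monotonicity of $\ss$, is used to ensure $m$ charges every interval so that pointwise/nodal information controls the $L^2$ limit), and that no energy is lost in passing to the interpolant. The degeneracy and possible singularity of the diffusion coefficient $1/\ss'$ is exactly what forces one to argue entirely through $\ss$ and $m$ rather than through Lebesgue measure and an elliptic coefficient, so all estimates must be phrased intrinsically in terms of $d\ss$ and $m$; once that discipline is maintained the argument goes through without ellipticity. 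A secondary technical point is verifying the compatibility of semigroup convergence with the stationary initial distributions so that finite-dimensional distributions converge, but this is routine given $m_n\to m$ weakly and the uniform boundedness of the transported resolvents.
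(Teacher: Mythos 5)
Your Mosco-convergence half is essentially the paper's argument: the recovery sequence is the restriction $R_nu$, whose harmonic extension $H_{I_n}$ is the piecewise $\ss$-linear interpolant and is an energy minimizer by the orthogonal decomposition $\mathscr{F}^{(\ss)}_e=\mathscr{F}^{(\ss)}_{e,I\backslash I_n}\oplus\mathscr{H}_{I_n}^{(\ss)}$; the liminf inequality comes from transporting $\varphi_n$ to $H_{I_n}u_n\in\mathscr{F}^{(\ss)}_e$, showing $H_{I_n}u_n\to\varphi$ weakly in $L^2(I)$ via the bound $|H_{I_n}u_n(x)-E_n\varphi_n(x)|\le M\big(\ss(x)-\ss(a_i)\big)^{1/2}$ and uniform continuity of $\ss$, and then using weak lower semicontinuity of the energy. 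That part is sound and matches the paper.

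The tightness half is where you diverge, and as written it has a genuine gap. The estimate $\mathbb{E}^n_{m_n}\big[(f(X^n_{t+h})-f(X^n_t))^2\big]=2\langle f-T^n_hf,f\rangle_n\le 2h\,\mathscr{E}^n(f,f)$ is correct under the stationary law, but it does not by itself yield tightness in $\mathbb{D}_I[0,\infty)$: Kolmogorov's tightness criterion needs a bound $Ch^{1+\beta}$ with $\beta>0$ (an $O(h)$ second moment is exactly the borderline case --- Brownian motion satisfies it, and one needs the fourth moment there), and Aldous's criterion requires control at stopping times, which a deterministic-time moment bound does not supply. The way Burdzy--Chen actually close this gap is the Lyons--Zheng forward--backward martingale decomposition $f(X_t)-f(X_0)=\tfrac12M_t+\tfrac12(\tilde M_T-\tilde M_{T-t})$ under the stationary law, followed by Doob/martingale tightness applied to each half together with control of the jump sizes (here indeed $\le\Delta^n$); if you take this route you must include that step explicitly. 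The paper instead follows Kim's scheme: it first shows $\sup_{t\le T}|G^n_\lambda\pi_nu(X^n_t)-R_nG_\lambda u(X^n_t)|\to0$ in $L^1(\mathbb{P}_{m_n})$ via the capacity estimate $\mathbb{P}_{m_n}[\sigma_{D^n}\le T]\le e^Tc\,\varepsilon^{-1}\|G^n_\lambda\pi_nu-R_nG_\lambda u\|_{\mathscr{E}^n_1}$ and Corollary \ref{lemrn}, approximates $f(X^n_t)$ uniformly along paths by $\lambda_0G^n_{\lambda_0}\pi_nu(X^n_t)$, and then uses Fukushima's decomposition to exhibit $\lambda_0G^n_{\lambda_0}\pi_nu(X^n_t)-\int_0^tZ^n_s\,ds$ as a martingale with uniformly bounded drift, so that Ethier--Kurtz Theorem 3.9.4 gives tightness with no path-modulus estimate at all. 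Either route can be made to work, but your sketch is missing the mechanism that converts the $O(h)$ second-moment bound into an actual tightness criterion.
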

The proof is provided in the following sections.

\section{Mosco convergence}\label{s3}
We shall prove the Mosco convergence. The generalized version of Mosco convergence from the appendix of \cite{ZPT} is included here for handy reference.  Refer to \cite{KT} for more details about Mosco convergence.

For  $n\geq 1$, $(\mathcal{H}_n, \left\langle \cdot,\cdot\right\rangle_n)$ and $(\mathcal{H}, \left\langle\cdot,\cdot\right\rangle)$ are Hilbert spaces with the corresponding norms $\|\cdot\|_n$ and $\|\cdot\|$. Suppose that $(\mathcal{E}^n,\mathcal{D}(\mathcal{E}^n))$ and $(\mathcal{E},\mathcal{D}(\mathcal{E}))$ are densely defined closed symmetric bilinear forms on $\mathcal{H}_n$ and $\mathcal{H}$, respectively. We extend the definitions of $\mathcal{E}^n(u,u)$ to every $u\in\mathcal{H}_n$ by defining $\mathcal{E}^n(u,u)=\infty$ for $u\in\mathcal{H}_n\backslash\mathcal{D}(\mathcal{E}^n)$. Similar extension is done for $\mathcal{E}$ as well.

We assume throughout this section that for each $n\geq 1$, there is a bounded linear operator $E_n: \mathcal{H}_n\rightarrow\mathcal{H}$ such that $\pi_n$ is a left inverse of $E_n$, that is
\begin{equation}\label{eqep1}
\left\langle\pi_nf, f_n\right\rangle_n=\left\langle f, E_nf_n\right\rangle\mbox{ and }\pi_nE_nf_n=f_n\mbox{ for every }f\in\mathcal{H}, f_n\in\mathcal{H}_n.
\end{equation}

Moreover we assume that $\pi_n :\mathcal{H}\rightarrow\mathcal{H}_n$ satisfies the following two conditions
\begin{equation}\label{eqep2}
\sup_{n\geq 1}\|\pi_n\|<\infty\quad\text{and}\quad\lim_{n\rightarrow\infty}\|\pi_nf\|_n=\| f\|\quad\text{for every}\quad f\in\mathcal{H}.
\end{equation}

Let $\|E_n\|$ denote the operator norm. Note that $\left\langle E_nf_n, E_ng_n\right\rangle=\left\langle f_n,g_n\right\rangle_n$ for every $f_n,g_n\in\mathcal{H}_n, n\geq 1$ and so clearly
\begin{equation}\label{eqep3}
\|E_n\|=1\quad\text{and}\quad\|E_nf_n\|=\|f_n\|_n\quad\text{for every}\quad f_n\in\mathcal{H}_n, n\geq 1.
\end{equation}

{\it Definition.} Under the above setting, we say that $\mathcal{E}^n$ is Mosco-convergent to $\mathcal{E}$ in the generalized sense if
the following two conditions are satisfied.
\begin{itemize}
\item[(a)] If $v_n\in\mathcal{H}_n, u\in\mathcal{H}$ and $E_nv_n\rightarrow u$ weakly in $\mathcal{H}$, then
$$
\liminf_{n\rightarrow\infty}\mathcal{E}^n(v_n,v_n)\geq\mathcal{E}(u,u).
$$
\item[(b)] For every $u\in\mathcal{H}$, there exists $u_n\in\mathcal{H}_n$ such that $E_nu_n\rightarrow u$ strongly in $\mathcal{H}$ and
$$
\limsup_{n\rightarrow\infty}\mathcal{E}^n(u_n,u_n)\leq\mathcal{E}(u,u).
$$
\end{itemize}

In our case, since the state space of $X^n$ is $I_n$ while $X$ has $I$ as its state space, we need to define
the transforms between the functions on $I_n$ and $I$. First, if $v$ is defined on $I_n$
, let $E_nv$ be the extension of $v$ to $I$ defined by
\begin{equation*}
E_nv(x):=
v(a_i),\mbox{ $x\in [a_i,a_{i+1}), 1\leq i\leq n$}.
\end{equation*}
Besides, if $u\in L^2(I)$,
\begin{equation*}
\pi_nu(x):=
\dfrac{1}{m_n(\{a_i\})}\int_{a_i}^{a_{i+1}}u(z)m(dz), \mbox{ $x=a_i, 1\leq i\leq n$}
\end{equation*}

If $u\in C(I)$, define $R_nu$ to be the restriction of $u$ to $I_n$, i.e.
$$R_nu(x) :=u(x), \quad x\in I_n.$$

It is easy to check that  $E_n$ and $\pi_n$ defined above satisfy the condition of \eqref{eqep1} and \eqref{eqep2}. Let $L^2(I_n,m_n)$ and $L^2(I)$ correspond to $\mathcal{H}_n$ and $\mathcal{H}$ respectively. Notations such as inner product and norm keep the same. It is clear that for $u, v\in C(I)$, it holds that
\begin{equation}\label{eqrp}
\lim_{n\rightarrow\infty}\left\langle\pi_nu, \pi_nv\right\rangle_n=\lim_{n\rightarrow\infty}\left\langle\pi_nu, R_nv\right\rangle_n=\lim_{n\rightarrow\infty}\left\langle R_nu, R_nv\right\rangle_n.
\end{equation}

\begin{thm}\label{MOSCO}
Let $(\mathscr{E}^n,\mathscr{F}^n)$ and $(\mathscr{E}^{(\ss)},\mathscr{F}^{(\ss)})$ be the Dirichlet forms in \eqref{eqd} and \eqref{eqc1}. Then $\mathscr{E}^n$ converges to $\mathscr{E}^{(\ss)}$ in the generalized sense of Mosco.
\end{thm}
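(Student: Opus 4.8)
The plan is to verify the two conditions (a) and (b) in the definition of generalized Mosco convergence, with $\mathcal{H}_n=L^2(I_n,m_n)$, $\mathcal{H}=L^2(I)$ and the maps $E_n,\pi_n,R_n$ defined above. The structural facts I will lean on are: the trace identity $\mathscr{E}^n(\varphi,\varphi)=\mathscr{E}^{(\ss)}(H_{I_n}\varphi,H_{I_n}\varphi)$ together with the explicit shape of $H_{I_n}\varphi$ obtained just above the statement (the $\ss$-piecewise-linear interpolant of $\varphi$ through the $a_i$, with $dH_{I_n}\varphi/d\ss$ constant on each $(a_i,a_{i+1})$ equal to $(\varphi(a_{i+1})-\varphi(a_i))/(\ss(a_{i+1})-\ss(a_i))$); the identity $2\mathscr{E}^{(\ss)}(w,w)=\int_I(dw/d\ss)^2\,d\ss$; the fundamental theorem $w(x)=w(a_1)+\int_{a_1}^x(dw/d\ss)\,d\ss$ for $\ss$-absolutely continuous $w$; and the facts that $\ss$ is uniformly continuous on the compact interval $I$ while $m$ is a finite measure. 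Condition (b) is the easy half: if $u\notin\mathscr{F}^{(\ss)}$ then $\mathscr{E}^{(\ss)}(u,u)=\infty$, so any strongly convergent recovery sequence works, e.g.\ $u_n=\pi_nu$ (for which $E_n\pi_nu\to u$ strongly in $L^2(I)$ by a standard argument based on \eqref{eqep2}); and if $u\in\mathscr{F}^{(\ss)}$, then $u$ is $\ss$-absolutely continuous hence uniformly continuous on $I$, so taking $u_n=R_nu$ makes $E_nR_nu$ the step function with value $u(a_i)$ on $[a_i,a_{i+1})$, whence $\|E_nR_nu-u\|_\infty\to0$ and $E_nu_n\to u$ strongly in $L^2(I)$; moreover $H_{I_n}u$ is the $\mathscr{E}^{(\ss)}$-orthogonal projection of $u$ onto $\mathscr{H}^{(\ss)}_{I_n}$ in the decomposition \eqref{eqo}, so $\mathscr{E}^n(R_nu,R_nu)=\mathscr{E}^{(\ss)}(H_{I_n}u,H_{I_n}u)\le\mathscr{E}^{(\ss)}(u,u)$ for every $n$, which gives the required $\limsup$ bound.

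For condition (a), suppose $v_n\in\mathcal{H}_n$, $u\in\mathcal{H}$, and $E_nv_n\to u$ weakly in $L^2(I)$. Passing to a subsequence I may assume that $\mathscr{E}^n(v_n,v_n)$ converges to $\ell:=\liminf_n\mathscr{E}^n(v_n,v_n)$, and I may assume $\ell<\infty$, as otherwise the inequality is trivial. Set $w_n:=H_{I_n}v_n$ and $g_n:=dw_n/d\ss$, so that $\|g_n\|^2_{L^2(I,d\ss)}=2\mathscr{E}^n(v_n,v_n)$ stays bounded; passing to a further subsequence (along which $E_nv_n$ still converges weakly to $u$) I may assume $g_n\to g$ weakly in $L^2(I,d\ss)$. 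The first step is the interpolation estimate that $w_n-E_nv_n\to0$ strongly in $L^2(I)$: on each $[a_i,a_{i+1})$ the interpolation weight $(\ss(x)-\ss(a_i))/(\ss(a_{i+1})-\ss(a_i))$ lies in $[0,1]$, so $|w_n-E_nv_n|\le|v_n(a_{i+1})-v_n(a_i)|$ there, and therefore
\[
\|w_n-E_nv_n\|^2_{L^2(I,m)}\ \le\ m(I)\,\Big(\max_i\big(\ss(a_{i+1})-\ss(a_i)\big)\Big)\,2\mathscr{E}^n(v_n,v_n)\ \longrightarrow\ 0
\]
by the uniform continuity of $\ss$. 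Consequently $w_n\to u$ weakly in $L^2(I)$ as well.

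The core of the argument is to identify $u$. Put $\phi_n(x):=\int_{a_1}^xg_n\,d\ss=\langle g_n,\mathbf{1}_{[a_1,x]}\rangle_{L^2(I,d\ss)}$; the weak convergence of $g_n$ gives $\phi_n(x)\to\phi(x):=\int_{a_1}^xg\,d\ss$ for each $x$, while Cauchy--Schwarz gives $|\phi_n(x)-\phi_n(y)|\le\|g_n\|_{L^2(I,d\ss)}\,|\ss(x)-\ss(y)|^{1/2}$, so that $\{\phi_n\}$ is uniformly bounded (recall $\phi_n(a_1)=0$) and equicontinuous. By the Arzel\`{a}--Ascoli theorem $\phi_n\to\phi$ uniformly on $I$, and $\phi$ is $\ss$-absolutely continuous with $d\phi/d\ss=g\in L^2(I,d\ss)$. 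Since $w_n=v_n(a_1)+\phi_n$ with $\phi_n\to\phi$ in $L^2(I)$ and $w_n\to u$ weakly, the constant functions $v_n(a_1)\mathbf{1}=w_n-\phi_n$ converge weakly in $L^2(I)$ to $u-\phi$; the constants forming a closed subspace, $u-\phi$ must be a constant $c$, so $u=c+\phi$ is $\ss$-absolutely continuous with $du/d\ss=g\in L^2(I,d\ss)$, and $u\in L^2(I)$ as a weak limit — thus $u\in\mathscr{F}^{(\ss)}$. Finally, the weak lower semicontinuity of the $L^2(I,d\ss)$-norm yields
\[
2\mathscr{E}^{(\ss)}(u,u)=\|g\|^2_{L^2(I,d\ss)}\ \le\ \liminf_n\|g_n\|^2_{L^2(I,d\ss)}=\liminf_n2\mathscr{E}^n(v_n,v_n)=2\ell,
\]
which is precisely (a).

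I expect the main obstacle to be this identification step inside (a): recovering, from an energy bound alone, both the $\ss$-absolute continuity of the weak limit $u$ and the identity $du/d\ss=g$. This is exactly the point at which one-dimensionality is indispensable — the $L^2(I,d\ss)$-bound on the interpolant derivatives $g_n$ converts, via Cauchy--Schwarz, into H\"{o}lder-type equicontinuity of the primitives $\phi_n$, after which Arzel\`{a}--Ascoli and the closedness of the constant functions do the rest; there is no analogue of this mechanism in dimension $\ge2$, which is the reason the proof of Mosco convergence is dimension-dependent (as noted in the introduction). The interpolation estimate $\|w_n-E_nv_n\|_{L^2}\to0$ is the other place where the explicit one-dimensional form of $H_{I_n}$ is genuinely used, though it is routine once that formula is available.
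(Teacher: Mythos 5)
Your proof is correct, and its skeleton (trace identity, the $\ss$-piecewise-linear interpolant $w_n=H_{I_n}v_n$, the estimate $w_n-E_nv_n\to 0$, weak compactness, lower semicontinuity of the $L^2(I,d\ss)$-norm, and the projection inequality for part (b)) matches the paper's. The one genuinely different ingredient is the identification step in (a). The paper stays inside Dirichlet-form machinery: it extracts a subsequence of $H_{I_n}u_n$ converging weakly in the Hilbert space $(\mathscr{F}^{(\ss)},\mathscr{E}_1^{(\ss)})$, identifies the limit with $\varphi$ by testing against resolvents via $\mathscr{E}_1^{(\ss)}(G_1g,H_{I_n}u_n)=\langle g,H_{I_n}u_n\rangle$, and only then deduces weak convergence of the $\ss$-derivatives. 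You instead take a weak $L^2(I,d\ss)$-limit $g$ of the derivatives directly and reconstruct the limit function by hand: pointwise convergence of the primitives $\phi_n(x)=\langle g_n,\mathbf{1}_{[a_1,x]}\rangle$, H\"older-type equicontinuity from Cauchy--Schwarz, Arzel\`a--Ascoli, and the weak closedness of the constants to conclude $u=c+\phi$ with $du/d\ss=g$. Your route is more elementary and self-contained (it never invokes the closedness of $\mathscr{E}^{(\ss)}$ or the resolvent characterization of the form, and it makes explicit where one-dimensionality enters), whereas the paper's route is shorter given the standard Dirichlet-form toolkit and generalizes more readily to settings where primitives are unavailable. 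Two small points worth stating explicitly if you write this up: the weak $L^2(I,m)$-limit of the constants $v_n(a_1)\mathbf{1}$ is itself a constant because the constants form a finite-dimensional, hence weakly closed, subspace of the finite-measure space $L^2(I,m)$; and on the last cell $[a_n,a_{n+1}]$ the interpolant is constant, so your cellwise bound still holds there.
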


\begin{proof}
First, let us check Definition (a). Let $\varphi_n\in L^2(I_n), \varphi\in L^2(I)$, and $E_n\varphi_n$ weakly converge to $\varphi$ in $L^2(I)$. Suppose $\liminf\limits_{n\rightarrow\infty}\mathscr{E}^n(\varphi_n,\varphi_n)<\infty$ and $\varphi_n\in\mathscr{F}^n$ without loss of generality.

For each $\varphi_n$, there exists $u_n\in\mathscr{F}_e^{(s)}$, such that $\varphi_n=u_n \mbox{ $m_n$-a.e. on } I_n$ and $\mathscr{E}^n(\varphi_n,\varphi_n)=\mathscr{E}^{(s)}(H_{I_n}u_n,H_{I_n}u_n)$. We now prove that $H_{I_n}u_n$ also converges weakly to $\varphi$ in $L^2(I)$. Since $\liminf\limits_{n\rightarrow\infty}\mathscr{E}^{(s)}(H_{I_n}u_n,H_{I_n}u_n)<\infty$, there exists a subsequence (still denoted by) $\bigg\{\dfrac{dH_{I_n}u_n}{d\ss}\bigg\}$ bounded by a finite constant $M$ in $L^2(I)$.
For $x\in [a_i,a_{i+1}], 1\leq i\leq n$, we have $$H_{I_n}u_n(x)-E_n\varphi_n(x)=\int^x_{a_i}\dfrac{dH_{I_n}u_n}{d\ss}d\ss.$$ It follows that
\begin{align*}
|H_{I_n}u_n(x)-E_n\varphi_n(x)|&\leq\int_{a_i}^x\bigg|\dfrac{dH_{I_n}u_n}{d\ss}\bigg|d\ss\\
&\leq\big(\ss(x)-\ss(a_i)\big)^{1/2}\bigg(\int_{a_i}^x\bigg|
\dfrac{dH_{I_n}u_n}{d\ss}\bigg|^2d\ss\bigg)^{1/2}\\
&\leq M \big(\ss(x)-\ss(a_i)\big)^{1/2}.
\end{align*}
Since $\ss$ is uniformly continuous on $I$, for any $\varepsilon>0$, there exists $N$ large such that $|\ss(x)-\ss(y)|<\varepsilon$ whenever $|x-y|<\Delta^N$. Then for every $\phi\in L^2(I), n\geq N$, we get
\begin{align*}
&\left|\int_I\phi(x)\big(H_{I_n}u_n(x)-E_n\varphi_n(x)\big)m(dx)\right|\\
&\leq\sum_{i=1}^{n}\int_{a_i}^{a_{i+1}}\big|\phi(x)\big(H_{I_n}u_n(x)-E_n\varphi_n(x)\big)\big|m(dx)\\
&\leq M\sum_{i=1}^{n}\int_{a_i}^{a_{i+1}}|\phi(x)|\big(\ss(x)-\ss(a_i)\big)^{1/2}m(dx)\\
&\leq\varepsilon^{1/2}M\int_I |\phi(x)|m(dx) \leq\varepsilon^{1/2}M\sqrt{m(I)}\|\phi\|.
\end{align*}
Since $\varepsilon$ is arbitrary, we deduce that $H_{I_n}u_n\rightarrow \varphi$ weakly in $L^2(I)$.
Therefore $\liminf\limits_{n\rightarrow\infty}\|H_{I_n}u_n\|_{\mathscr{E}_1^{(s)}}<\infty$. Then there exists a subsequence, still denoted by $\{H_{I_n}u_n\}$ $\mathscr{E}_1^{(s)}$-weakly converging to a unique $v\in\mathscr{F}^{(s)}$.

For each $g\in L^2(I)$, we have
$$\mathscr{E}_1^{(s)}(G_1g,H_{I_n}u_n)=\langle g, H_{I_n}u_n\rangle.$$
By letting $n\rightarrow\infty$, it follows $\mathscr{E}_1^{(s)}(G_1g,v)=\langle g,v\rangle=\langle g,\varphi\rangle.$ Hence $v=\varphi$, $m$-a.e. Besides, for $f\in\mathscr{F}^{(s)}$,
$\mathscr{E}^{(s)}(f,H_{I_n}u_n)=\mathscr{E}_1^{(s)}(f,H_{I_n}u_n)-\langle f,H_{I_n}u_n\rangle$. From the fact that $H_{I_n}u_n\rightarrow\varphi$ weakly in $L^2(I)$ and $\mathscr{F}^{(s)}$, it follows $\dfrac{dH_{I_n}u_n}{d\ss}\rightarrow\dfrac{d\varphi}{d\ss}$ weakly in $L^2(I,d\ss)$.
Therefore,
\begin{align*}
\liminf\limits_{n\rightarrow\infty}\mathscr{E}^n(\varphi_n,\varphi_n)&=\liminf\limits_{n\rightarrow\infty}\mathscr{E}^{(s)}(H_{I_n}u_n,H_{I_n}u_n)=\liminf\limits_{n\rightarrow\infty}\frac{1}{2}\int_I\bigg(\dfrac{dH_{I_n}u_n}{d\ss}\bigg)^2d\ss\\
&\geq \frac{1}{2}\int_I\bigg(\dfrac{d\varphi}{d\ss}\bigg)^2d\ss=\mathscr{E}^{(s)}(\varphi,\varphi).
\end{align*}

Next, let us verify Definition (b). Suppose $u\in\mathscr{F}^{(\ss)}$ without loss of generality. Then $u\ll \ss$. Define $v_n :=R_nu$. It is obvious that $E_nv_n\rightarrow u$ strongly in $L^2(I)$. Besides,
$$
\limsup_{n\rightarrow\infty}\mathcal{E}^n(v_n,v_n)=\limsup_{n\rightarrow\infty}\mathcal{E}^{(s)}(H_{I_n}u,H_{I_n}u)\leq\mathscr{E}^{(\ss)}(u,u).
$$
The last inequality follows from \eqref{eqo}.
\end{proof}

Let $\{T_t^n, t\geq 0\}$ and $\{G_\lambda^n, \lambda>0\}$ be the strongly continuous symmetric contraction semigroup and the resolvent associated with $(\mathscr{E}^n,\mathcal{F}^n)$. Similarly, the semigroup and resolvent associated with $(\mathscr{E}^{(\ss)},\mathscr{F}^{\ss})$ will be denoted by $\{T_t, t\geq 1\}$ and $\{G_\lambda, \lambda>0\}$ respectively.
The following equivalence theorem is referred from \cite{ZPT}.

\begin{thm}\label{thmm}
Under the above setting, the followings are equivalent.
\begin{itemize}
\item[\rm (a)] $\mathscr{E}^n$ is Mosco-convergent to $\mathscr{E}$ in the generalized sense;
\item[\rm (b)] $E_nT_t^n\pi_n\rightarrow T_t$ strongly in $\mathcal{H}$ and the convergence is uniform in any finite interval of $t>0$;
\item[\rm (c)] $E_nG_\lambda^n\pi_n\rightarrow G_\lambda$ strongly in $\mathcal{H}$ for every $\lambda>0$.
\end{itemize}
\end{thm}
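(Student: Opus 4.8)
The statement is a Mosco/Trotter--Kato equivalence; I would prove it rather than merely cite it, establishing the two equivalences $\mathrm{(a)}\Leftrightarrow\mathrm{(c)}$ and $\mathrm{(b)}\Leftrightarrow\mathrm{(c)}$ separately, with (c) treated as the central statement. The form--resolvent passage will be a $\Gamma$-convergence argument built on the variational characterization of $G_\lambda$; the resolvent--semigroup passage will be a Trotter--Kato argument. First I would record the elementary consequences of \eqref{eqep1}--\eqref{eqep3}: that $\pi_n^{\ast}=E_n$, $E_n^{\ast}=\pi_n$, $\pi_nE_n=\mathrm{id}_{\mathcal H_n}$; that $\langle\pi_nf,\pi_ng\rangle_n\to\langle f,g\rangle$ for all $f,g\in\mathcal H$ (polarize \eqref{eqep2}), and hence, with $\|E_n\pi_nf\|=\|\pi_nf\|_n\to\|f\|$, that $E_n\pi_nf\to f$ strongly; and that for every $\lambda>0$ the operator $E_nG_\lambda^n\pi_n$ is self-adjoint, nonnegative, with norm $\le\lambda^{-1}\sup_n\|\pi_n\|^2$. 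Write $A\ge0$, $A^n\ge0$ for the self-adjoint generators of $\mathcal E$, $\mathcal E^n$.

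For $\mathrm{(a)}\Rightarrow\mathrm{(c)}$ I would argue variationally. Fix $\lambda>0$, $f\in\mathcal H$, put $f_n:=\pi_nf$, and consider
\[
\Phi_\lambda^n(w):=\mathcal E^n(w,w)+\lambda\|w\|_n^2-2\langle f_n,w\rangle_n,\qquad
\Phi_\lambda(v):=\mathcal E(v,v)+\lambda\|v\|^2-2\langle f,v\rangle,
\]
whose unique minimizers are $w_n^{\ast}=G_\lambda^nf_n$ and $v^{\ast}=G_\lambda f$, with $\min\Phi_\lambda^n=-\langle f_n,w_n^{\ast}\rangle_n$ and $\min\Phi_\lambda=-\langle f,v^{\ast}\rangle$. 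Feeding the recovery sequence $u_n$ for $v^{\ast}$ supplied by Mosco condition (b) into $\Phi_\lambda^n$, and using $\langle f_n,u_n\rangle_n=\langle f,E_nu_n\rangle\to\langle f,v^{\ast}\rangle$ and $\|u_n\|_n=\|E_nu_n\|\to\|v^{\ast}\|$, gives $\limsup_n\Phi_\lambda^n(w_n^{\ast})\le\limsup_n\Phi_\lambda^n(u_n)\le\Phi_\lambda(v^{\ast})$. From the Euler relation $\mathcal E^n(w_n^{\ast},w_n^{\ast})+\lambda\|w_n^{\ast}\|_n^2=\langle f_n,w_n^{\ast}\rangle_n$ the family $\{w_n^{\ast}\}$ is bounded in $\mathcal H_n$ with $\sup_n\mathcal E^n(w_n^{\ast},w_n^{\ast})<\infty$, so along a subsequence $E_nw_n^{\ast}\rightharpoonup g$ weakly in $\mathcal H$, and then Mosco (a) together with weak lower semicontinuity of $\|\cdot\|$ gives $\liminf_n\Phi_\lambda^n(w_n^{\ast})\ge\Phi_\lambda(g)\ge\Phi_\lambda(v^{\ast})$. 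Comparing the two bounds forces $\Phi_\lambda(g)=\Phi_\lambda(v^{\ast})$, hence $g=v^{\ast}$ by uniqueness of the minimizer; so $E_nw_n^{\ast}\rightharpoonup v^{\ast}$ along the full sequence and $\Phi_\lambda^n(w_n^{\ast})\to\Phi_\lambda(v^{\ast})$, and since the $\liminf$ of each of $\mathcal E^n(w_n^{\ast},w_n^{\ast})$ and $\lambda\|w_n^{\ast}\|_n^2$ is at least the corresponding limiting value while their sum converges to the sum of those values, each converges; in particular $\|w_n^{\ast}\|_n\to\|v^{\ast}\|$, and weak convergence plus convergence of norms yields $E_nG_\lambda^n\pi_nf\to G_\lambda f$ strongly, i.e.\ (c).

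For $\mathrm{(c)}\Rightarrow\mathrm{(a)}$ I would verify both Mosco conditions. Condition (b): for $u\in\mathcal D(\mathcal E)$ and $\beta>0$ set $u_n^{\beta}:=\beta G_\beta^n\pi_nu$; since $(\beta+A)\beta G_\beta u=\beta u$ one has $u_n^{\beta}=G_\beta^n\pi_n(\beta u)$, so (c) gives $E_nu_n^{\beta}\to\beta G_\beta u$ strongly, while $\mathcal E^n(u_n^{\beta},u_n^{\beta})+\beta\|u_n^{\beta}\|_n^2=\langle\pi_n(\beta u),u_n^{\beta}\rangle_n=\langle\beta u,E_nu_n^{\beta}\rangle\to\langle\beta u,\beta G_\beta u\rangle$ and $\|u_n^{\beta}\|_n\to\|\beta G_\beta u\|$, whence $\mathcal E^n(u_n^{\beta},u_n^{\beta})\to\mathcal E(\beta G_\beta u,\beta G_\beta u)$; since $\beta G_\beta u\to u$ and $\mathcal E(\beta G_\beta u,\beta G_\beta u)\to\mathcal E(u,u)$ as $\beta\uparrow\infty$, a standard diagonal choice $\beta=\beta(n)\to\infty$ produces the recovery sequence (and for $u\notin\mathcal D(\mathcal E)$ take $u_n:=\pi_nu$). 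Condition (a): recall the Yosida forms $\mathcal E^{(\beta)}(v,v):=\langle\beta(I-\beta G_\beta)v,v\rangle\uparrow\mathcal E(v,v)$ as $\beta\uparrow\infty$, likewise $\mathcal E^{n,(\beta)}$. If $E_nv_n\rightharpoonup u$, put $w_n:=E_nv_n$ and use $\langle\,\cdot\,,\pi_n\,\cdot\,\rangle_n=\langle E_n\,\cdot\,,\,\cdot\,\rangle$ to get $\mathcal E^{n,(\beta)}(v_n,v_n)=\langle\Theta_\beta^nw_n,w_n\rangle$ with $\Theta_\beta^n:=E_n\,\beta(I-\beta G_\beta^n)\,\pi_n$, self-adjoint, $0\le\Theta_\beta^n\le\beta\sup_n\|\pi_n\|^2\,I$, and $\Theta_\beta^n\to\Lambda_\beta:=\beta(I-\beta G_\beta)$ strongly by (c) and $E_n\pi_n\to I$. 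Strong convergence of uniformly bounded nonnegative self-adjoint operators passes to square roots, so $(\Theta_\beta^n)^{1/2}w_n\rightharpoonup\Lambda_\beta^{1/2}u$ and
\[
\liminf_{n\to\infty}\mathcal E^n(v_n,v_n)\ \ge\ \liminf_{n\to\infty}\mathcal E^{n,(\beta)}(v_n,v_n)=\liminf_{n\to\infty}\bigl\|(\Theta_\beta^n)^{1/2}w_n\bigr\|^2\ \ge\ \bigl\|\Lambda_\beta^{1/2}u\bigr\|^2=\mathcal E^{(\beta)}(u,u);
\]
letting $\beta\uparrow\infty$ gives Mosco (a).

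Finally, for $\mathrm{(b)}\Leftrightarrow\mathrm{(c)}$: the direction $\mathrm{(b)}\Rightarrow\mathrm{(c)}$ is the Laplace transform, $E_nG_\lambda^n\pi_nf=\int_0^\infty e^{-\lambda t}E_nT_t^n\pi_nf\,dt$, with dominated convergence (integrands bounded in norm by $e^{-\lambda t}\sup_n\|\pi_n\|\,\|f\|$). For $\mathrm{(c)}\Rightarrow\mathrm{(b)}$ I would use the self-adjoint functional calculus: the set $\mathcal A$ of $\psi\in C([0,\infty])$ for which $E_n\psi(A^n)\pi_n\to\psi(A)$ strongly is, using $\pi_nE_n=\mathrm{id}_{\mathcal H_n}$ to handle products and $\|\psi(A^n)\|\le\|\psi\|_\infty$ to handle uniform limits, a uniformly closed unital subalgebra of $C([0,\infty])$; by (c) it contains every $\mu\mapsto(\lambda+\mu)^{-1}$ and, by $E_n\pi_n\to I$, the constants, and these separate the points of $[0,\infty]$, so Stone--Weierstrass gives $\mathcal A=C([0,\infty])$; applying this to $\mu\mapsto e^{-t\mu}$ yields $E_nT_t^n\pi_n\to T_t$ strongly for each $t>0$, and local uniformity on any $[a,b]\subset(0,\infty)$ follows from an equicontinuity estimate uniform in $n$, namely the spectral-calculus bound $\|(I-e^{-rA^n})T_s^ng\|_n\le C(a)\,r\,\|g\|_n$ valid for $s\ge a>0$ (the factor $e^{-2s\mu}$ damping the high-frequency spectral mass of $A^n$). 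I expect this last point to be the main obstacle: a naive comparison $\|T_t^n-e^{-t\beta(I-\beta G_\beta^n)}\|$ cannot be made uniform in $n$ because the spectral mass of $A^n$ may escape to high frequencies, and it is precisely the damping available once $t$ is bounded away from $0$, together with the functional-calculus description of the resolvent algebra, that makes the argument go through; the remaining implications are comparatively soft, using only the variational form of $G_\lambda$ and routine weak-convergence manipulations in Hilbert space.
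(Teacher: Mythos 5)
Your proposal is correct, but it is doing something different from the paper: the paper does not prove Theorem \ref{thmm} at all --- it states it as ``referred from \cite{ZPT}'' (the appendix of Chen--Kim--Kumagai, which in turn rests on Kuwae--Shioya \cite{KT}). What you have written is a self-contained proof of that cited result, and it checks out. The route you take is essentially the standard one behind those references: the variational characterization of $G_\lambda$ plus a $\Gamma$-convergence squeeze for (a)$\Rightarrow$(c) (the recovery sequence controls $\limsup\Phi^n_\lambda(w_n^\ast)$, the liminf inequality plus weak lower semicontinuity controls the other side, and uniqueness of the minimizer upgrades subsequential weak limits to strong convergence via convergence of norms); the Yosida forms $\mathcal E^{(\beta)}(v,v)=\langle\beta(I-\beta G_\beta)v,v\rangle\uparrow\mathcal E(v,v)$ together with the observation that $\Theta^n_\beta:=E_n\beta(I-\beta G^n_\beta)\pi_n$ is self-adjoint, uniformly bounded, nonnegative and strongly convergent (so that square roots converge strongly and $\|(\Theta^n_\beta)^{1/2}E_nv_n\|$ is weakly lower semicontinuous) for (c)$\Rightarrow$(a); Laplace transform with dominated convergence for (b)$\Rightarrow$(c); and Stone--Weierstrass on $C([0,\infty])$ with the product identity $E_n\psi(A^n)\phi(A^n)\pi_n=(E_n\psi(A^n)\pi_n)(E_n\phi(A^n)\pi_n)$ (valid because $\pi_nE_n=\mathrm{id}_{\mathcal H_n}$), plus the spectral equicontinuity bound $\sup_{\mu\ge0}(1-e^{-r\mu})e^{-s\mu}\le r/(es)$ for the locally uniform convergence in (c)$\Rightarrow$(b). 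All the individual steps are sound; the only places where you compress are routine (the diagonal selection of $\beta(n)$ in the recovery-sequence construction, the fact that $S_nx_n\rightharpoonup Sx$ when $S_n\to S$ strongly with uniform bounds and $x_n\rightharpoonup x$, and the Arz\`ela--Ascoli-type passage from pointwise convergence plus equicontinuity on $[a,b]$ to uniform convergence there), and each of these is standard. What your approach buys is independence from the external reference; what the paper's citation buys is brevity. If this proof were to be inserted, it would be worth stating explicitly that the forms are nonnegative (so that $A,A^n\ge0$ and the minimizers of $\Phi_\lambda$, $\Phi^n_\lambda$ exist), which the paper's Dirichlet-form setting guarantees but the bare phrase ``closed symmetric bilinear forms'' does not.
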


A main corollary of Mosco convergence is the following convergence of resolvent.
\def\EE{\mathscr{E}}

\begin{cor}\label{lemrn}
For $u\in C_c(I)$, it holds that
$$
\lim_n\|R_nG_\lambda u-G_\lambda^n\pi_nu\|_{\mathscr{E}_1^n}=0.
$$

\end{cor}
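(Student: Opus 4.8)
The plan is to derive the statement from the equivalence (c) in Theorem \ref{thmm}, which gives $E_nG_\lambda^n\pi_n \to G_\lambda$ strongly in $L^2(I)$, together with the special structure of the operators $R_n$, $E_n$, $\pi_n$. The key point to exploit is that for a function $\varphi$ on $I_n$, the extension $E_n\varphi$ is the \emph{piecewise-constant} interpolation, and the $\mathscr{E}_1^n$-norm of $\varphi$ is, up to the passage through $H_{I_n}$, comparable to the $\mathscr{E}_1^{(\ss)}$-norm of the harmonic interpolation $H_{I_n}\varphi$; moreover the $L^2(I_n,m_n)$-norm of $\varphi$ equals the $L^2(I)$-norm of $E_n\varphi$ by \eqref{eqep3}. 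So I would first reduce the $\mathscr{E}_1^n$-convergence to an $L^2$-type convergence plus an energy (Mosco) comparison.

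First I would fix $u\in C_c(I)$ and write $f_n := R_nG_\lambda u - G_\lambda^n\pi_n u \in \mathscr{F}^n$. Note $G_\lambda^n\pi_n u \in \mathscr{F}^n$, and $R_nG_\lambda u$ lies in $\mathscr{F}^n$ because $G_\lambda u \in \mathscr{F}^{(\ss)} \subset \mathscr{F}^{(\ss)}_e$, so its restriction to $I_n$ is in $\mathscr{F}^n$ with $\mathscr{E}^n(R_nG_\lambda u, R_nG_\lambda u) = \mathscr{E}^{(\ss)}(H_{I_n}G_\lambda u, H_{I_n}G_\lambda u) \le \mathscr{E}^{(\ss)}(G_\lambda u, G_\lambda u)$ by \eqref{eqo}. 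I would then use the standard resolvent identity for Dirichlet forms: for any $\psi \in \mathscr{F}^n$,
\begin{align*}
\mathscr{E}^n_1(G_\lambda^n\pi_n u, \psi) = \langle \pi_n u, \psi\rangle_n .
\end{align*}
Taking $\psi = f_n$ and combining with the analogous identity $\mathscr{E}^{(\ss)}_1(G_\lambda u, E_n f_n) = \langle u, E_n f_n\rangle$ (valid since $E_nf_n \in \mathscr{F}^{(\ss)}$, using the piecewise-constant structure and \eqref{eqep1}), I would get
\begin{align*}
\mathscr{E}^n_1(f_n,f_n) &= \mathscr{E}^n_1(R_nG_\lambda u, f_n) - \langle \pi_n u, f_n\rangle_n \\
&= \big[\mathscr{E}^n_1(R_nG_\lambda u, f_n) - \mathscr{E}^{(\ss)}_1(G_\lambda u, E_nf_n)\big] + \big[\langle u, E_nf_n\rangle - \langle \pi_n u, f_n\rangle_n\big] + \big[\mathscr{E}^{(\ss)}_1(G_\lambda u, E_nf_n) - \langle u, E_nf_n\rangle\big],
\end{align*}
and the last bracket vanishes. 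The second bracket is controlled because $\langle \pi_n u, f_n\rangle_n = \langle u, E_n f_n\rangle$ by \eqref{eqep1} exactly, so it is $0$. Thus I must show the first bracket tends to $0$; equivalently $\mathscr{E}^n(R_nG_\lambda u, f_n) + \langle R_nG_\lambda u, f_n\rangle_n \to \mathscr{E}^{(\ss)}(G_\lambda u, E_nf_n) + \langle G_\lambda u, E_n f_n\rangle$ in the limit, while simultaneously controlling $\|f_n\|_{\mathscr{E}_1^n}$, which is what we are trying to bound — so this has to be organized as an a priori bound: since $\{f_n\}$ has bounded $\mathscr{E}_1^n$-norm (from the energy comparison above and Theorem \ref{thmm}(c) giving $L^2$-convergence, hence $L^2$-boundedness, of $G_\lambda^n\pi_n u$), one extracts weak limits and uses the Mosco lower-semicontinuity (Definition (a)) together with the strong $L^2$-convergence $E_n f_n \to 0$ coming from Theorem \ref{thmm}(c) and the elementary estimate $\|E_nR_nG_\lambda u - G_\lambda u\|_{L^2(I)} \to 0$ (valid for $G_\lambda u$ continuous, by uniform continuity).

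Concretely the cleanest route: show $\|E_n f_n\|_{L^2(I)} \to 0$ (from Theorem \ref{thmm}(c) plus $E_nR_n \to \mathrm{id}$ on $C_c(I)$ in $L^2$), show $\limsup_n \mathscr{E}^n(f_n,f_n) < \infty$, and then compute $\mathscr{E}^n_1(f_n,f_n) = \langle R_nG_\lambda u - \pi_n u, f_n\rangle_n$-type expression honestly: using $\mathscr{E}_1^n(G_\lambda^n\pi_n u, f_n) = \langle \pi_n u, f_n\rangle_n$ and $\mathscr{E}^n_1(R_nG_\lambda u, f_n) = \mathscr{E}^{(\ss)}_1(H_{I_n}G_\lambda u, H_{I_n}(E_nf_n))$, and noting $H_{I_n}(E_nf_n)$ is the harmonic interpolation of $f_n$, which is $\mathscr{E}_1^{(\ss)}$-bounded and converges weakly in $L^2(I)$ to $0$ (by the argument already used in the proof of Theorem \ref{MOSCO}, which shows harmonic interpolation and piecewise-constant interpolation have the same $L^2$-weak limit), hence converges $\mathscr{E}_1^{(\ss)}$-weakly to $0$. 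Since $H_{I_n}G_\lambda u \to G_\lambda u$ $\mathscr{E}_1^{(\ss)}$-strongly (again by \eqref{eqo}: $\mathscr{E}^{(\ss)}(G_\lambda u - H_{I_n}G_\lambda u, \cdot)$ estimates plus $L^2$-convergence), the pairing $\mathscr{E}^{(\ss)}_1(H_{I_n}G_\lambda u, H_{I_n}(E_nf_n)) \to \mathscr{E}^{(\ss)}_1(G_\lambda u, 0) = 0$. Combined with $\langle \pi_n u, f_n\rangle_n = \langle u, E_nf_n\rangle \to 0$, we conclude $\mathscr{E}^n_1(f_n,f_n) \to 0$, which is the claim.

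The main obstacle I anticipate is the bookkeeping needed to legitimately pass to the limit in the bilinear pairing $\mathscr{E}^n_1(R_nG_\lambda u, f_n)$ while $f_n$ is only known a posteriori to be small: one needs the a priori $\mathscr{E}_1^n$-boundedness of $f_n$ first (so that weak limits exist), and one needs the two-part convergence — that harmonic interpolation $H_{I_n}$ converts the $\mathscr{E}^n$-inner product into the $\mathscr{E}^{(\ss)}$-inner product exactly, and that $H_{I_n}$ preserves the $L^2$-weak limit (zero) — to hold simultaneously. The strong $\mathscr{E}_1^{(\ss)}$-convergence $H_{I_n}G_\lambda u \to G_\lambda u$ is the other technical point: it does not follow from Mosco convergence directly but from the projection formula \eqref{eqo} applied to the fixed element $G_\lambda u \in \mathscr{F}^{(\ss)}$, giving $\mathscr{E}^{(\ss)}(G_\lambda u - H_{I_n}G_\lambda u, G_\lambda u - H_{I_n}G_\lambda u) = \mathscr{E}^{(\ss)}(G_\lambda u, G_\lambda u) - \mathscr{E}^{(\ss)}(H_{I_n}G_\lambda u, H_{I_n}G_\lambda u) \to 0$, because the subtracted energies converge (monotone increasing and bounded above, with the right limit forced by $L^2$-convergence of $H_{I_n}G_\lambda u$ to $G_\lambda u$). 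Once these ingredients are in place the conclusion is immediate.
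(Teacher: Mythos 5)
Your strategy does reach the stated conclusion, but it takes a genuinely longer route than the paper and contains several slips that would need repair before it is a proof. The paper's argument is a direct expansion: writing $f_n=R_nG_\lambda u-G_\lambda^n\pi_nu$ and using the resolvent identity in the form $\mathscr{E}^n(G_\lambda^n g,\psi)=\langle g-\lambda G_\lambda^n g,\psi\rangle_n$, one gets
\begin{align*}
\mathscr{E}^n(f_n,f_n)=\mathscr{E}^n(R_nG_\lambda u,R_nG_\lambda u)-2\langle R_nG_\lambda u,\pi_nu-\lambda G_\lambda^n\pi_nu\rangle_n+\langle G_\lambda^n\pi_nu,\pi_nu-\lambda G_\lambda^n\pi_nu\rangle_n,
\end{align*}
and each of the three terms has an explicitly computable limit: the first tends to $\mathscr{E}^{(\ss)}(G_\lambda u,G_\lambda u)$ by Mosco convergence applied to the specific sequence $R_nG_\lambda u$, and the two inner products both tend to $\langle G_\lambda u,u-\lambda G_\lambda u\rangle$ by the strong resolvent convergence of Theorem \ref{thmm}(c) together with \eqref{eqrp}--\eqref{eqep3}; the sum is $\mathscr{E}^{(\ss)}_\lambda(G_\lambda u,G_\lambda u)-\langle G_\lambda u,u\rangle=0$, and the $L^2$ part of the $\mathscr{E}_1^n$-norm is handled separately. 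No a priori bound on $f_n$, no subsequence extraction, no weak limits, and no pairing of a strongly convergent sequence against a weakly convergent one are needed. Your ``pair against $f_n$'' organization forces you to supply exactly those extra ingredients ($\mathscr{E}_1^n$-boundedness of $f_n$, weak $\mathscr{E}_1^{(\ss)}$-convergence of $H_{I_n}(E_nf_n)$ to $0$, strong $\mathscr{E}_1^{(\ss)}$-convergence of $H_{I_n}G_\lambda u$ to $G_\lambda u$); all of them can be established with tools already in the paper, so the route is viable, but it buys nothing over expanding the square.

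The slips you must fix: (i) a non-constant piecewise-constant function is not absolutely continuous with respect to $\ss$, so $E_nf_n\notin\mathscr{F}^{(\ss)}$ and the identity $\mathscr{E}^{(\ss)}_1(G_\lambda u,E_nf_n)=\langle u,E_nf_n\rangle$ in your first decomposition is meaningless; you implicitly abandon it in your ``cleanest route,'' and it should be dropped entirely. (ii) The resolvent identity is $\mathscr{E}^n_\lambda(G_\lambda^n\pi_nu,\psi)=\langle\pi_nu,\psi\rangle_n$, not $\mathscr{E}^n_1(\cdot,\cdot)=\langle\pi_nu,\psi\rangle_n$; for $\lambda\neq1$ your version produces an extra term $(1-\lambda)\langle G_\lambda^n\pi_nu,f_n\rangle_n$, which does vanish in the limit (both factors converge strongly in $L^2$ after applying $E_n$) but cannot be suppressed as an exact identity. (iii) $\mathscr{E}^n_1(R_nG_\lambda u,f_n)=\mathscr{E}^{(\ss)}_1(H_{I_n}G_\lambda u,H_{I_n}(E_nf_n))$ is exact only for the energy parts; the $L^2(I_n,m_n)$ pairing of the restrictions is not the $L^2(I,m)$ pairing of the harmonic interpolations, though again the discrepancy tends to $0$. (iv) The monotonicity of $n\mapsto\mathscr{E}^{(\ss)}(H_{I_n}G_\lambda u,H_{I_n}G_\lambda u)$ requires nested partitions, which the paper does not assume; replace it by lower semicontinuity of the closed form along $H_{I_n}G_\lambda u\to G_\lambda u$ in $L^2$ combined with the upper bound from \eqref{eqo}. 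With these corrections your argument closes, but I would recommend rewriting it in the paper's three-term form.
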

Note that $\|\cdot\|_{\mathscr{E}_1}$ denotes the $\mathscr{E}_1$-norm and $G_\lambda u\in\mathscr{F}^{(\ss)}$ and $\|G_\lambda u\|_\infty\leq \lambda^{-1}\|u\|_\infty$, so that $G_\lambda u\in C(I)$, $R_nG_\lambda u$ is well defined.

\begin{proof} It follows from the relation between $\EE^n$ and its resolvent $G^n_{\lambda}$  that
 \begin{align*}&\EE^n(R_nG_\lambda u-G_\lambda^n\pi_nu,R_nG_\lambda u-G_\lambda^n\pi_nu) \\&=\EE^n(R_nG_\lambda u, R_n G_{\lambda u})-2\langle R_nG_\lambda u,\pi_nu-\lambda G^n_{\lambda}\pi_nu\rangle_n
+\langle G_\lambda^n\pi_n u,\pi_nu-\lambda G_\lambda^n\pi_nu\rangle_n\end{align*}
It is obvious from Theorem \ref{MOSCO} that
$\mathscr{E}^n(R_nG_\lambda u ,R_nG_\lambda u)\rightarrow\mathscr{E}^{(\ss)}(G_\lambda u ,G_\lambda u)$.
By \eqref{eqrp},\eqref{eqep1}, \eqref{eqep2},
\eqref{eqep3} and Theorem \ref{thmm}, two inner products above have the same limit
$\langle G_{\lambda}u, u-\lambda G_{\lambda} u\rangle.$
Hence \begin{align*}\lim_n\mathscr{E}^n&(R_nG_\lambda u-G_\lambda^n\pi_nu,R_nG_\lambda u-G_\lambda^n\pi_nu)\\
&=\mathscr{E}^{(\ss)}(G_\lambda u, G_\lambda u)-\left\langle  G_\lambda u, u-\lambda G_{\lambda}u\right\rangle\\
&=\EE^{(\ss)}_{\lambda}(G_{\lambda}u,G_{\lambda}u)-\langle G_{\lambda}u,u\rangle=0.\end{align*}
Similarly, we can deduce that $\|R_nG_\lambda u-G_\lambda^n\pi_nu\|_n\rightarrow 0$.
\end{proof}

\section{Proof of Theorem \ref{thmwc}}\label{s4}
In this section, we complete the proof of weak convergence by providing some tightness results. The idea of it is due to \cite{P}.
\begin{proof}[Proof of Theorem \ref{thmwc}]
We complete the proof of the main theorem according to the following steps.

Step 1. First, we show that for every $\lambda>0, T>0$ and $u\in C_c(I)$, it holds that
\begin{equation}\label{eqrn}
\limsup_{n\rightarrow\infty}\mathbb{E}_{m_n}\bigg[\sup_{t\in [0,T]}\big|G_\lambda^n\pi_nu(X_t^n)-R_nG_\lambda u(X_t^n)\big|\bigg]=0.
\end{equation}
Fix $\lambda,T>0$.
Given $\varepsilon>0$, let
$$
D^n=\{x\in I_n; \big|G_\lambda^n\pi_nu(x)-R_nG_\lambda u(x)\big|>\varepsilon\}, \sigma_{D^n} =\inf\{t>0; X^n_t\in D^n\}.
$$
The left side in \eqref{eqrn} is less than the sum of
$$
M_1 :=\limsup\limits_{n\rightarrow\infty}\mathbb{E}_{m_n}\bigg[\sup_{t\in [0,T]}\big|G_\lambda^n\pi_nu(X_t^n)-R_nG_\lambda u(X_t^n)\big|;\sigma_{D^n}>T\bigg]
$$
and
$$
M_2 :=\limsup\limits_{n\rightarrow\infty}\mathbb{E}_{m_n}\bigg[\sup_{t\in [0,T]}\big|G_\lambda^n\pi_nu(X_t^n)-R_nG_\lambda u(X_t^n)\big|;\sigma_{D^n}\leq T\bigg].
$$
It is obvious that $M_1\leq\varepsilon$. As for $M_2$, if we set $p^1_{D^n}(\cdot):=\mathbb{E}_{\cdot}^n\big[e^{-\sigma_{D^n}}\big]$, we have
\begin{align}
\mathbb{P}_{m_n}\big[\sigma_{D^n}&\leq T\big]\leq e^{T}\langle 1,p^1_{D^n}\rangle_n\leq e^T\sqrt{m(I)}\cdot\|p^1_{D^n}\|_n\notag\\
&\leq e^Tc\text{Cap}^n(D^n)^{1/2}\leq e^Tc\varepsilon^{-1}\|G_\lambda^n\pi_nu(x)-R_nG_\lambda u(x)\|_{\EE^n_1}. \label{lemma4.2}
\end{align}
by the definition of the capacity (see \cite[\S 2.1]{MYM}), where $c=\sqrt{m(I)}$. Therefore $$M_2\leq\frac{2\|u\|_\infty}{\lambda}\limsup\limits_{n\rightarrow\infty}\mathbb{P}_{m_n}\big[\sigma_{D^n}\leq T\big]=0$$ by Corollary \ref{lemrn}. Since $\varepsilon$ is arbitrary, \eqref{eqrn} follows.

Step 2. Let $f\in C_c(I)$. We next prove that for any $T>0, \varepsilon>0$, there exist $\lambda_0>0$ and $u\in C_c(I)\cap\mathscr{F}^{(\ss)}$, such that
\begin{equation}\label{equ}
\limsup_{n\rightarrow\infty}\mathbb{E}_{m_n}\bigg[\sup_{t\in [0,T]}\big| f(X_t^n)-\lambda_0G_{\lambda_0}^n\pi_nu(X^n_t)\big|\bigg]<\varepsilon.
\end{equation}
Fix $f\in C_c(I),T,\varepsilon >0.$  Since $\mathscr{E}^{(\ss)}$ is regular, there exists $u\in C_c(I)\cap\mathscr{F}^{(s)}$, such that
\begin{equation}\label{eqf}
\sup_{x\in I}|f(x)-u(x)|<\frac{\varepsilon}{4}.
\end{equation}
Denote
$$
F^n=\{x\in I_n; \big|R_n(u-\lambda G_\lambda u)(x)\big|>\dfrac{\varepsilon}{2}\}, \sigma_{F^n} =\inf\{t>0; X^n_t\in F^n\}.
$$
Let
$$
N_1 :=\limsup\limits_{n\rightarrow\infty}\mathbb{E}_{m_n}\bigg[\sup_{t\in [0,T]}\big|R_n(u-\lambda G_\lambda u)(X_t^n)\big|;\sigma_{F^n}>T\bigg]
$$
and
$$
N_2 :=\limsup\limits_{n\rightarrow\infty}\mathbb{E}_{m_n}\bigg[\sup_{t\in [0,T]}\big|R_n(u-\lambda G_\lambda u)(X_t^n)\big|;\sigma_{F^n}\leq T\bigg].
$$
It is obvious that $N_1\leq\dfrac{\varepsilon}{2}$. As for $N_2$, in a similar way of \eqref{lemma4.2}, we have 
\begin{align}
\mathbb{P}_{m_n}\big[\sigma_{F^n}\leq T]&\leq 2e^Tc\varepsilon^{-1}\|R_n(u-\lambda G_\lambda u)(x)\|_{\EE^n_1}. 
\end{align}
Therefore,
\begin{align}
N_2&\leq 2\|u\|_\infty\limsup\limits_{n\rightarrow\infty}\mathbb{P}_{m_n}\big[\sigma_{F^n}\leq T\big]\\
&\leq 4e^Tc\varepsilon^{-1}\|u\|_\infty\limsup_{n\rightarrow\infty}\|R_n(u-\lambda G_\lambda u)(x)\|_{\EE^n_1}\\
&\leq 4e^Tc\varepsilon^{-1}\|u\|_\infty \|(u-\lambda G_\lambda u)(x)\|_{\EE^{(\ss)}_1},
\end{align}
where the last inequality follows from the proof of Mosco convergence. For $u\in\mathscr{F}^{(\ss)}$, we know that $\lambda G_\lambda u\rightarrow u$ in $\mathscr{E}^{(\ss)}_1$-norm by \cite[Lemma 1.3.3]{MYM}. Then choose $\lambda_0$ such that $N_2<\dfrac{3\varepsilon}{4}.$ It follows that
\begin{equation}\label{eqf1}
\limsup_{n\rightarrow\infty}\mathbb{E}_{m_n}\bigg[\sup_{t\in [0,T]}\big|R_n(u-\lambda_0 G_{\lambda_0} u)(X_t^n)\big|\bigg]<\frac{3}{4}\varepsilon.
\end{equation}
Therefore, by \eqref{eqrn}\eqref{eqf}\eqref{eqf1}, \eqref{equ} holds.

Step 3. In this step, we will demonstrate that for any finite $m\geq 1$ and $\{f_1,...,f_m\}\subset C_c(I), \{(f_1,...,f_m)(X^n)\}_{n\geq 1}$ under the condition that  $X^n$ takes $m_n$ as initial distribution forms a tight family on $\mathbb{D}_{\mathbb{R}^m}[0,\infty)$.

It suffices to consider $m=1$ and $f :=f_1$ for the sake of brevity. Fix $\varepsilon, T>0$, apply Step 2 to these $f, \varepsilon, T$ and choose $u,\lambda_0$ accordingly. Set $Y_t^n  :=\lambda_0G^n_{\lambda_0}\pi_nu(X_t^n)$ such that \eqref{equ} holds. Set $Z_t^n :=\lambda_0(\lambda_0G^n_{\lambda_0}\pi_nu-\pi_nu)(X_t^n)$.

From Fukushima's decomposition of $X^n$ with respect to $\lambda_0G^n_{\lambda_0}\pi_nu$ (see \cite[Theorem 5.2.2]{MYM}), one can find that
$$
t\mapsto Y_t^n-\int_0^tZ_s^nds
$$
is a martingale relative to the filtration of $X^n$.  Besides, \eqref{equ} yields that
$$
\limsup_{n\rightarrow\infty}\mathbb{E}_{m_n}\bigg[\sup_{t\in [0,T]}\big| f(X_t^n)-Y_t^n\big|\bigg]<\varepsilon.
$$
Furthermore, it follows that
$$
\limsup_{n\rightarrow\infty}\mathbb{E}_{m_n}\bigg[\sup_{t\in [0,T]}|Z_t^n|\bigg]\leq 2\lambda_0\|u\|_\infty<\infty.
$$
Therefore, \cite[Theorem 3.9.4,Remark 3.9.5(b)]{ST} yields the conclusion.

Step 4. Since $C_c(I)$ strongly separates points in $I$(for the definition of strong separation, see \cite[\S3.4,(4.7)]{ST}; for the proof of this fact, see \cite{DM}), by\cite[Corollary 3.9.2]{ST}, it only remains to show that for any finite $m\geq 1$ and $\{f_1,...,f_m\}\subset C_c(I), (f_1,...,f_m)(X^n)$ weakly converge to $(f_1,...,f_m)(X)$ with $X_n$ and $X$ having $m_n$ and $m$ as their initial distribution respectively. To this end, take $g_1\in C_b(\mathbb{R}^m)$ and set $h_1 :=g_1\circ (f_1,...,f_m)\in C_b(I).$ Given any $t_1,...,t_p>0$, by the contraction of semigroup and uniform continuity of $h_1(x)$, we have
$$
\lim_{n\rightarrow\infty}\mathbb{E}_{m_n}\big[h_1(X_{t_1}^n)\big]=\lim_{n\rightarrow\infty}\int_IE_nT_{t_1}^nR_nh_1(x)m(dx)=\lim_{n\rightarrow\infty}\int_IE_nT_{t_1}^n\pi_nh_1(x)m(dx),$$
which converge to $\mathbb{E}_{m}\big[h_1(X_{t_1})\big]$ by Theorem \ref{thmm}. In fact, the last equality is deduced from the following reasons. Since $h_1(x)$ is uniformly continuous on $I$, for any $\varepsilon>0$, there exists $N$ large such that for any $n>N$, $|R_nh_1(x)-\pi_nh_1(x)|<\varepsilon, \forall x\in I_n.$ Therefore, for $n\geq N$, by the contraction of semigroup,
\[
\bigg|\int_IE_nT_{t_1}^n\big(R_nh_1(x)-\pi_nh_1(x)\big)m(dx)\bigg|\leq \varepsilon m(I).
\]
Since $\varepsilon$ is arbitrary, the last equality holds.

Inductively (see \cite{P} for more details), we conclude that
$$
\lim_{n\rightarrow\infty}\mathbb{E}_{m_n}\big[h_1(X_{t_1}^n)\cdot\cdot\cdot h_p(X_{t_p}^n)\big]=\mathbb{E}_{m}\big[h_1(X_{t_1})\cdot\cdot\cdot h_p(X_{t_p})\big].
$$
Combined with the tightness of $\{(f_1,...,f_m)(X^n)\}_{n\geq 1}$ we deduce the result.
\end{proof}

{\it Remark.}
Our proof of tightness results is relatively general. So we conclude a useful result. Given a bounded domain $D\subset\mathbb{R}^d$ and a sequence of subsets $D_n\subset D,n\geq 1$. Radon measure $m$ and $m_n$ are on $D$ and $D_n$ respectively with $m_n$ weakly converging to $m$ on $D$. For $n\geq 1$, $X^n$ is the stochastic process on $D_n$ corresponding to the regular Dirichlet form $(\mathscr{E}^n,\mathscr{F}^n)$ on $L^2(D_n,m_n)$. $X$ is the stochastic process on $D$ corresponding to the regular Dirichlet form $(\mathscr{E},\mathscr{F})$ on $L^2(D,m)$. Use the setting of the generalized version of Mosco convergence in \S\ref{s4}. $\mathcal{H} :=L^2(D,m)$ and $\mathcal{H}_n :=L^2(D_n,m_n)$. If the following two conditions are satisfied:
\begin{itemize}
\item[(a)] If $v_n\in\mathcal{H}_n, u\in\mathcal{H}$ and $E_nv_n\rightarrow u$ weakly in $\mathcal{H}$, then
$$
\liminf_{n\rightarrow\infty}\mathscr{E}^n(v_n,v_n)\geq\mathscr{E}(u,u).
$$
\item[(b)] For every $u\in\mathcal{H}$,
$\limsup\limits_{n\rightarrow\infty}\mathscr{E}^n(\pi_nu,\pi_nu)\leq\mathscr{E}(u,u).$
\end{itemize}
Then $(X^n, \mathbb{P}^n_{m_n})$ converges weakly to $(X, \mathbb{P}_m)$ on $\mathbb{D}_{D}[0,\infty)$ equipped with the Skorohod topology.

\noindent
{\it Acknowledgement.} The authors would like to thank Yushu, Zheng, PhD student of the second
author, for his helpful suggestions.

\end{document}